\documentclass[A4,11pt]{article}

\usepackage[a4paper,margin=2.45cm]{geometry}

\usepackage{authblk}

\usepackage{amssymb,amsmath,amsthm}

\usepackage{mathtools,enumitem,mathabx}
\usepackage[linktoc=all,hidelinks]{hyperref}

\DeclareMathOperator{\supp}{supp}
\DeclareMathOperator{\diag}{diag}

\newcommand{\bra}[1]{\langle#1\rangle}
\newcommand{\norm}[1]{\|#1\|_{\ell^2}}
\newcommand{\eval}[2]{\left.#1\right|_{#2}}
\newcommand{\F}[1]{\widehat{\mathbf 1}_{#1}}

\newtheorem{Thm}{Theorem}
\newtheorem{Prop}{Proposition}
\newtheorem{Conj}{Conjecture}
\newtheorem{Ex}{Example}

\title{On number of cyclic $n$-roots and disjointness of Fourier supports}
\author{Weiqi Zhou\thanks{zwq@xzit.edu.cn}}
\affil{\small School of Mathematics and Statistics, Xuzhou University of Technology \\  {\footnotesize Lishui Road 2, Yunlong District, Xuzhou, Jiangsu Province, China 221018}}
\date{}							

\begin{document}
\maketitle
\begin{abstract}
A cyclic $n$-root is an $n$-dimensional complex vector that solves a particular set of multivariate polynomial equations. There is a one-to-one correspondence between unimodular cyclic $n$-roots and bi-unimodular vectors (CAZAC sequences) with leading entry one. It was conjectured by Bj{\"o}rck and Saffari that the set of cyclic $n$-roots is finite if and only if $n$ is square free. It is known that such a set is infinite if $n$ is not square free, and finite if $n$ is prime. A critical reduction in Haagerup's proof for prime $n$ is to show that infinity of cyclic $n$-roots (for any $n$) implies existence of two vectors with disjoint supports in both the time domain and the frequency domain. In this paper we show that such pair of vectors always exist if $n$ is composite, indicating that the original reduction is not adequate for composite square free cases. A discussion on the existence of a single vector whose support is disjoint with its Fourier transform is also included. \\

{\noindent
{\bf Keywords}: cyclic n-roots; CAZAC sequences; bi-unimodular vectors; Chebotar\"ev theorem;
\\[1ex]
{\bf 2020 MSC}: 42A05; 94A11}
\end{abstract}

\section{A briefing of the main result}
A vector $z=(z_0,z_1,\ldots,z_{n-1})^T\in\mathbb C^n$ is called a \emph{cyclic $n$-root} if it solves the following system of multivariate homogeneous polynomial equations:
\begin{equation} \label{EqCycR}
\begin{dcases}
z_0+z_1+\ldots+z_{n-1}=0, \\
z_0z_1+z_1z_2+\ldots+z_{n-1}z_0=0, \\
\quad\quad\quad\quad\quad\quad\vdots \\
z_0z_1\cdots z_{n-2}+\ldots+z_{n-1}z_0\cdots z_{n-3}=0, \\
z_0\cdots z_{n-1}=1.
\end{dcases}
\end{equation}
The homogeneous products in the first $n-1$ equations have to be taken circulantly instead of arbitrarily. If they were taken arbitrarily, then by the Vieta theorem, $z$ can only be a permutation of roots of unity. Cyclic $n$-roots are initially introduced in \cite{bjorck1990} to search for bi-unimodular vectors (CAZAC sequences, see Section \ref{SecRef} for details). As of now general complex solutions of \eqref{EqCR} has become a research area of independent interest. For small numbers $n=2,3,5,6,7,10,11,13,14$, with the aid of computers, their numbers are complete known, see e.g., tables in \cite{benedetto2019, fuhr2015}. It is also conjectured that (A number is called \emph{square free} if it is the product of distinct primes, so that it has no divisor that is a perfect square)

\begin{Conj}[\cite{bjorck1995}] \label{ConjBS}
The number of cyclic $n$-roots is finite if and only if $n$ is square free.
\end{Conj}

The only if part of this conjecture is established in \cite{bjorck1995}. The if part is only known for prime $n$, and is proved in \cite{haagerup2008}.

Let $\zeta_n=e^{2\pi i/n}$ and $F=n^{-1/2}[\zeta_n^{ij}]_{j,k=0}^{n-1}$ the Fourier matrix. Let $x=(x_0,\ldots,x_{n-1})^T\in\mathbb C^n$, denote by $\hat x=Fx$ its Fourier transform and $\check x=F^{-1}x$ its inverse Fourier transform. A critical step in the proof of prime cases in \cite{haagerup2008} is the following reduction:

\begin{Prop}[{\cite[Lemma 3.4]{haagerup2008}, \cite[Theorem 5.5]{benedetto2019}}] \label{PropCRR}
For a given $n$, if there are infinitely many cyclic $n$-roots, then there exist $u,v\in\mathbb C^n\setminus\{0\}$ such that
\begin{equation} \label{EqCRR}
u_iv_i=\hat u_i\check v_{i}=0
\end{equation}
holds for all $i\in\{0,1,\ldots,n-1\}$. 
\end{Prop}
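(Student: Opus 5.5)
We follow Haagerup's original route: pass from cyclic $n$-roots to a bilinear system, then use the fact that an infinite algebraic set is unbounded. The first step rewrites \eqref{EqCycR}. Given a cyclic $n$-root $z$, set $x_0=1$, $x_{k}=z_0z_1\cdots z_{k-1}$, and $y_k=1/x_k$, indices taken mod $n$. This is consistent because $z_0\cdots z_{n-1}=1$. The $j$-th equation of \eqref{EqCycR} reads $\sum_k x_{k+j}y_k=0$ for $j=1,\ldots,n-1$, and for $j=0$ we have $\sum_k x_ky_k=n$.

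In Fourier language, the cyclic cross-correlation of $x$ and $y$ equals $n\delta_0$. By the convolution theorem this is equivalent to $\hat x_i\,\check y_i=\text{const}$ for every $i$, with the constant equal to $1$ after the normalization of $F$ is fixed. Conversely, every pair $(x,y)$ with $x_iy_i=1$ and $\hat x_i\check y_i=1$ for all $i$, normalized by $x_0=1$, gives back a cyclic $n$-root via $z_k=x_{k+1}/x_k$. So infinitely many cyclic $n$-roots give an infinite affine algebraic variety
\[
V=\{(x,y)\in\mathbb C^{2n}:\ x_iy_i=1,\ \hat x_i\check y_i=1\ \ \forall i,\ x_0=1\}.
\]

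Next, an infinite affine variety in $\mathbb C^{2n}$ cannot be compact, since a compact affine variety is finite. Hence $V$ is unbounded, and we may pick $(x^{(m)},y^{(m)})\in V$ with $t_m:=\|(x^{(m)},y^{(m)})\|\to\infty$.

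The main obstacle is the scaling, because the product constraints are bilinear with right-hand side $1$. The naive choice $u^{(m)}=x^{(m)}/t_m$, $v^{(m)}=y^{(m)}/t_m$ does give $u_iv_i=t_m^{-2}\to0$ and $\hat u_i\check v_i\to0$. However, one of the limits $u,v$ might be zero even though the pair has norm one.

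To handle this, we use the one-parameter symmetry $(x,y)\mapsto(\lambda x,\lambda^{-1}y)$. This preserves both $x_iy_i$ and $\hat x_i\check y_i$, though not $x_0=1$, so we drop that normalization and work in the variety of solutions modulo this scaling. Concretely, we rescale so that $\|x^{(m)}\|=\|y^{(m)}\|=s_m$. The relation $x_iy_i=1$ forces $\sum_i|x_iy_i|=n$, and Cauchy--Schwarz then gives $s_m^2\ge n$.

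We still need unboundedness in this balanced normalization, and this is the delicate point. The argument is that if $s_m$ stayed bounded, then $\|x^{(m)}\|$ and $\|y^{(m)}\|$ would be bounded. Since $|x_0y_0|=1$, the coordinate $x_0$ is then bounded away from $0$ and $\infty$. Undoing the rescaling to restore $x_0=1$ would then keep the original points bounded, a contradiction.

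Finally, put $u^{(m)}=x^{(m)}/s_m$ and $v^{(m)}=y^{(m)}/s_m$, both of unit norm. Then $u_i^{(m)}v_i^{(m)}=s_m^{-2}\to0$ and $\hat u^{(m)}_i\check v^{(m)}_i=s_m^{-2}\to0$. Passing to a convergent subsequence on the product of unit spheres gives nonzero limits $u$ and $v$. By continuity of $F$ these satisfy \eqref{EqCRR}.
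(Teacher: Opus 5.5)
Your proposal is correct and follows the paper's route: the same reformulation $x_iy_i=\hat x_i\check y_i=1$ with $x_0=y_0=1$, the same unboundedness of an infinite affine variety, and the same compactness limit on unit spheres. Your balanced rescaling with $\lambda=\sqrt{\|y\|/\|x\|}$ produces exactly the paper's separately normalized vectors $x/\|x\|$ and $y/\|y\|$, since $s_m^2=\|x\|\,\|y\|$. Your contradiction argument for $s_m\to\infty$ rephrases the paper's inequality $\|x\|^2\|y\|^2\ge\|x\|^2+\|y\|^2-1$, and both rest on $x_0=y_0=1$.
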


Denote the support of $x$ by
$$\supp(x)=\{k: x_k\neq 0\}.$$
Then an equivalent formulation of \eqref{EqCRR} is 
\begin{equation} \label{EqCRR2}
\supp(u)\cap\supp(v)=\supp(\hat u)\cap\supp(\check v)=\emptyset.
\end{equation}

The main result of this paper is the following:
\begin{Thm} \label{ThmMain}
There exist $u,v\in\mathbb C^n\setminus\{0\}$ that satisfy \eqref{EqCRR} if and only if $n$ is composite. Moreover, for any divisor $d$ of $n$ that satisfies $1<d<\sqrt{2n}$, there exist $1+\lfloor d/2\rfloor$ number of vectors in $\mathbb C^n\setminus\{0\}$ that mutually satisfy \eqref{EqCRR}.
\end{Thm}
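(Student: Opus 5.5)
The plan is to prove the two directions separately. Nonexistence for prime $n$ will follow from Chebotar\"ev's theorem. Existence for composite $n$ will be an explicit construction using modulated indicator functions of cosets of subgroups of $\mathbb Z_n$, and the same construction will give the counting statement.

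\emph{Prime $n$.} Suppose $u,v\neq 0$ satisfy \eqref{EqCRR2}. Disjointness of supports gives $|\supp(u)|+|\supp(v)|\le n$ and $|\supp(\hat u)|+|\supp(\check v)|\le n$. For prime $n$, Chebotar\"ev's theorem says every minor of $F$ is nonzero, and this yields the uncertainty principle $|\supp(x)|+|\supp(\hat x)|\ge n+1$ for every $x\neq 0$. Since $F^{-1}$ is $F$ composed with the index reversal $i\mapsto -i$, the same bound holds for $\check x$. Applying it to $u$ and to $v$ and adding gives $2n+2\le 2n$, a contradiction. I expect this direction to be routine once Chebotar\"ev is quoted.

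\emph{Composite $n$.} Write $n=dm$ with $d,m\ge 2$. For $a\in\mathbb Z_m$ and $c\in\mathbb Z_d$, define $w^{(a,c)}_k=\zeta_n^{ck}$ if $k\equiv a\pmod m$ and $w^{(a,c)}_k=0$ otherwise. Thus $\supp(w^{(a,c)})=a+m\mathbb Z_n$, a coset of size $d$. A direct geometric-sum computation shows that $\widehat{w^{(a,c)}}$ is supported exactly on $\{j:\ j\equiv -c\pmod d\}$, up to the sign convention of $F$. Likewise $\check w^{(a,c)}$ is supported exactly on $\{j:\ j\equiv c\pmod d\}$, because the inverse transform flips the sign of the frequency. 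I will fix the sign carefully in the write-up.

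Hence $u=w^{(a,c)}$ and $v=w^{(a',c')}$ satisfy \eqref{EqCRR} if and only if
\[
a\not\equiv a' \pmod m \quad\text{and}\quad c+c'\not\equiv 0\pmod d .
\]
The pair is then $(u,v)$ in the ordered sense of \eqref{EqCRR}. Note that this condition is symmetric in the two vectors, so "mutually satisfy" causes no extra trouble. For $d,m\ge2$ we can simply take $a=0,a'=1,c=c'=0$, which proves the ``if'' direction.

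\emph{Counting statement.} Take the family $w^{(a_j,j)}$ for $j=0,1,\ldots,\lfloor d/2\rfloor$, with $a_j$ pairwise distinct residues mod $m$. For $i\neq j$ in this range we have $1\le i+j\le 2\lfloor d/2\rfloor-1\le d-1$, so $i+j\not\equiv 0\pmod d$. We also need $1+\lfloor d/2\rfloor$ distinct residues mod $m=n/d$, i.e.\ $m\ge 1+\lfloor d/2\rfloor$. This is exactly where $d<\sqrt{2n}$ enters: it gives $d^2<2n$, hence $m>d/2$, hence $m\ge \lfloor d/2\rfloor+1$. The step needing the most care is the Fourier computation of $\widehat{w^{(a,c)}}$ with its exact support and sign, because the sign asymmetry between $\hat u$ and $\check v$ is what forces the ``sum-free'' choice $\{0,\ldots,\lfloor d/2\rfloor\}$ in $\mathbb Z_d$. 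Once that computation is done, the rest is bookkeeping.
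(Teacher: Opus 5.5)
Your strategy is the paper's. You handle the prime case with the Chebotar\"ev-based uncertainty principle, and the composite case with translated and modulated indicators of cosets of $H_d$. The paper's $u^{(k)}=T^kM^{-k}\mathbf 1_{H_d}$ equals $\zeta_n^{k^2}w^{(k,-k)}$ in your notation; decoupling the translation $a$ from the modulation $c$ is a harmless generalization. Your Fourier computation and the pairwise criterion ``$a\not\equiv a'\pmod m$ and $c+c'\not\equiv 0\pmod d$'' are correct. So is the counting argument: the sum-free choice $\{0,\ldots,\lfloor d/2\rfloor\}$, with $m>d/2$ guaranteeing enough distinct translates.

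There is, however, a concrete error in the witness you give for the ``if'' direction. The choice $a=0$, $a'=1$, $c=c'=0$ violates your own criterion, since $c+c'=0\equiv 0\pmod d$. It gives $u=\mathbf 1_{H_d}$ and $v=\mathbf 1_{1+H_d}$, so $\hat u_0=\check v_0=d/\sqrt n\neq 0$. More generally, two vectors whose coordinate sums are both nonzero can never satisfy \eqref{EqCRR}: up to the factor $\sqrt n$, those sums are $\hat u_0$ and $\check v_0$.

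The repair is immediate. Take $c=0$, $c'=1$, so that $c+c'=1\not\equiv 0$ because $d\ge 2$. This is just the case $j\in\{0,1\}$ of your own counting family. For $n=6$ it is, up to a scalar and the order of the two vectors, the pair in Example~\ref{ExCR2}. Alternatively, derive the ``if'' direction from the counting statement, noting that the smallest prime factor $p$ of a composite $n$ satisfies $1<p\le\sqrt n<\sqrt{2n}$.
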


Theorem \ref{ThmMain} does not imply that the number of cyclic $n$-roots is infinite if $n$ is a square free composite number, it merely means that the reduction step in Proposition \ref{PropCRR} is not adequate for proving finiteness of cyclic $n$-roots for a square free composite number $n$.

Let $\mathbb Z_n=\{0,1,\ldots,n-1\}$ be the cyclic group of order $n$, a vector $x=(x_0,\ldots,x_{n-1})^T\in\mathbb C^n$ is just the function $f: k\mapsto x_k$ in $\ell^2(\mathbb Z_n)$, therefore in the sequel we do not distinguish between vectors on $\mathbb C^n$ and functions in $\ell^2(\mathbb Z_n)$. For any subset $A$ of $\mathbb Z_n$ we use the notation $\mathbf 1_A$ for the characteristic function (vector) on $A$ whose $k$-th term is defined as
$$\mathbf 1_A(k)=\begin{cases}1 & k\in A, \\ 0 & k\notin A.\end{cases}$$

An explicit example that satisfy \eqref{EqCRR} for $n=30$ is the following:
\begin{Ex} \label{ExCR}
Let $n=30$ and set
$$A=1+15\mathbb Z_2=\{1,16\}, \quad B=1+10\mathbb Z_3=\{1,11,21\}, \quad C=5\mathbb Z_6=\{0,5,10,15,20,25\},$$
then
$$u=3\cdot \mathbf 1_A-2\cdot\mathbf 1_B, \quad v=\mathbf 1_C$$
satisfy \eqref{EqCRR}. This can be verified by checking that \eqref{EqCRR2} holds. In fact, we have $\supp(u)=A\cup B$ is disjoint with $\supp(v)=C$, and $\supp(\hat u)=(2\mathbb Z_{15}\cup 3\mathbb Z_{10})\setminus 6\mathbb Z_5$ is disjoint with $\supp(v)=6\mathbb Z_5$.
\end{Ex}

Since cyclic $n$-roots are completely known (\cite{bjorck1991}) for $n=6=2\times 3$, it is perhaps more convincing to look at an example of $n=6$ to see that Theorem \ref{ThmMain} does not imply infinity of cyclic $n$-roots for square free composite $n$, but only reveals a bottleneck of the original reduction:
\begin{Ex} \label{ExCR2}
Let $n=6$ and set
$$u=(0,1,0,0,-1,0)^T, \quad v=(1,0,0,1,0,0)^T,$$
then $u,v$ satisfy \eqref{EqCRR}. In fact, $\hat u$ vanishes on all even indices $\{0,2,4\}$, while $\check v=\hat v$ vanishes on all odd indices $\{1,3,5\}$.
And $\supp(u)\cap\supp(v)=\emptyset$ is obvious, thus \eqref{EqCRR2} holds.
\end{Ex}

The rest of this paper is organized as follows: In the next section we present the Fourier analytical preliminaries of Example \ref{ExCR} and prove Theorem \ref{ThmMain}. We use two different methods to construct two different types of examples, one for $n=pqr$ in particular (Proposition \ref{PropPQR}), and one for composite $n$ in general (Theorem \ref{ThmMain}). This is not completely necessary, but rather that we consider the method behind Example \ref{ExCR} worth mentioning. The same method also appeared multiple times in subsequent constructions. We then give a brief discussion on the existence of a single vector whose support is disjoint with its Fourier transform and establish a partial result (Proposition \ref{PropZPD}). In the last section we provide backgrounds on cyclic $n$-roots, as well as its connections to bi-unimodular vectors, CAZAC sequences and circulant Hadamard matrices. To make the paper self-contained, an outline of Haagerup's proof of the prime cases is also included. 

\section{The counterexamples and their proof}
\subsection{The Fourier transform on finite cyclic groups}
The dual group of $\mathbb Z_n$ is $\widehat{\mathbb Z}_n=\{\chi_s(t)\}_{s=0}^{n-1}$ where the group characters are functions on $\mathbb Z_n$ defined as 
$$\chi_s(t)=e^{2\pi i st/n}=\zeta_n^{st}, \quad t\in\mathbb Z_n.$$
The Fourier transform and its inverse of a function $f\in\ell^2(\mathbb Z_n)$ are still functions on $\mathbb Z_n$ and are defined respectively as
\begin{align*}
\hat f(t)&=\frac{1}{\sqrt n}\sum_{s\in\mathbb Z_n}f(s)\chi_s(t)=\frac{1}{\sqrt n}\sum_{s\in\mathbb Z_n}f(s)\zeta_n^{st}, \quad t\in\mathbb Z_n, \\
\check f(t)&=\frac{1}{\sqrt n}\sum_{s\in\mathbb Z_n}f(s)\chi_s(-t)=\frac{1}{\sqrt n}\sum_{s\in\mathbb Z_n}f(s)\zeta_n^{-st}, \quad t\in\mathbb Z_n.
\end{align*}
In particular we have
$$\F{A}(t)=\frac{1}{\sqrt n}\sum_{a\in A}\chi_a(t)=\frac{1}{\sqrt n}\sum_{a\in A}\zeta_n^{at}.$$
One can verify that these are consistent with the definitions of $\hat x=Fx$ and $\check x=F^{-1}x$ using the Fourier matrix $F$. The Fourier matrix $F$ is just the character table $F=n^{-1/2}[\chi_i(j)]_{i,j=0}^{n-1}$.

Let 
$$T:(x_0,x_1,\ldots,x_{n-1})^T\mapsto(x_{n-1},x_0,\ldots,x_{n-2})^T$$ 
be the \emph{circulant shift} operator, and 
$$M=\diag(1,\zeta_n,\ldots,\zeta_n^{n-1})$$ 
the discrete modulation operator. $T^n=M^n=I$ is the identity operator. It is well known that $T$ is diagonalized by $F$ with $M$ being the diagonal matrix after diagonalization (the discrete convolution theorem), and 
\begin{equation} \label{EqTM}
FTx=M\hat x
\end{equation}
holds for all $x\in\mathbb C^n$. Let $d$ be a divisor $n$, and set $m=n/d$, denote by
$$H_d=m\mathbb Z_d=\{0,m,\ldots,(d-1)m\}$$
the subgroup of order $d$ in $\mathbb Z_n$, and 
\begin{equation} \label{EqPerp}
H_d^{\perp}=\{a\in\mathbb Z_n: ab \bmod n\equiv 0 \text{ holds for all } b\in H_d\}=d\mathbb Z_m=\{0,d,\ldots,(m-1)d\}
\end{equation}
the orthogonal subgroup of $H_d$. A core ingredient of the counterexamples is the following orthogonal relation (an analog of the Poisson summation formula), which can be verified by straightforward computation:
\begin{equation} \label{EqPoisson}
\F{H_d}=\frac{d}{\sqrt n}\cdot \mathbf 1_{H_d^{\perp}}. 
\end{equation}

\subsection{Main results}
\begin{Prop} \label{PropPQR}
Let $p,q,r>1$ be natural numbers that are mutually co-prime to each other, then there exist $u,v\in\mathbb C^{pqr}\setminus\{0\}$ that satisfy \eqref{EqCRR}.
\end{Prop}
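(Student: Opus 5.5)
The plan is to generalize Example \ref{ExCR} directly. Put $n=pqr$ and use the subgroups $H_p$, $H_q$ and $H_{pq}$ of $\mathbb Z_n$, as defined before \eqref{EqPerp}. I would set
$$A=1+H_p,\qquad B=1+H_q,\qquad C=H_{pq},\qquad u=q\cdot\mathbf 1_A-p\cdot\mathbf 1_B,\qquad v=\mathbf 1_C .$$
With $(p,q,r)=(2,3,5)$ this is exactly Example \ref{ExCR}. Clearly $v\neq 0$. By \eqref{EqPoisson}, $\check v=\overline{\hat v}=\frac{pq}{\sqrt n}\mathbf 1_{H_{pq}^\perp}$, so $\supp(\check v)=H_{pq}^\perp=pq\,\mathbb Z_r$.

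\textbf{Time domain.} Every element of $H_p$ and of $H_q$ is a multiple of $r$, because $H_p=qr\mathbb Z_p$ and $H_q=pr\mathbb Z_q$. Hence every element of $A\cup B$ is $\equiv 1 \pmod r$. On the other hand, $C=r\mathbb Z_{pq}$ consists of multiples of $r$. Since $r>1$, it follows that $\supp(u)\subseteq A\cup B$ is disjoint from $\supp(v)=C$.

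Next I check that $u\neq0$. Because $\gcd(p,q)=1$, we have $H_p\cap H_q=\{0\}$, so $A\cap B=\{1\}$. At any point of $A\setminus\{1\}$ (nonempty since $p>1$) the value of $u$ is $q\neq0$.

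\textbf{Frequency domain.} Write $\mathbf 1_{1+H}=T\mathbf 1_H$. By \eqref{EqTM} and \eqref{EqPoisson},
$$\hat u=M\big(q\,\F{H_p}-p\,\F{H_q}\big)=\frac{pq}{\sqrt n}\,M\big(\mathbf 1_{H_p^\perp}-\mathbf 1_{H_q^\perp}\big).$$
Since $M$ is an invertible diagonal operator, $\supp(\hat u)=H_p^\perp\,\triangle\, H_q^\perp$. It vanishes on $H_p^\perp\cap H_q^\perp=p\mathbb Z_{qr}\cap q\mathbb Z_{pr}$. The main point to verify is that this intersection equals $pq\,\mathbb Z_r=H_{pq}^\perp$, which holds because an integer divisible by both $p$ and $q$ is divisible by $pq$ when $\gcd(p,q)=1$. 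Therefore $\supp(\hat u)\cap\supp(\check v)=\emptyset$.

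Together with the time-domain step this gives \eqref{EqCRR2}, hence \eqref{EqCRR}. The only delicate bookkeeping is the identification of the orthogonal subgroups and their intersection, which uses the coprimality hypotheses $\gcd(p,q)=1$ and $r>1$. I also need $\gcd(pq,r)=1$ so that the subgroups $H_p,H_q,H_{pq}$ have the stated forms and $1\notin r\mathbb Z$.
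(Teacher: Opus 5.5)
Your proposal is correct and follows the paper's proof essentially verbatim: the same $A=1+H_p$, $B=1+H_q$, $C=H_{pq}$, $u=q\mathbf 1_A-p\mathbf 1_B$, $v=\mathbf 1_C$, the residue-mod-$r$ argument in the time domain, and the comparison of $H_p^\perp\triangle H_q^\perp$ with $H_{pq}^\perp=pq\mathbb Z_r$ in the frequency domain. One small correction to your closing remark: $\gcd(pq,r)=1$ is never used, since $H_d=(n/d)\mathbb Z_d$ holds for any divisor $d$ and $1\not\equiv 0 \pmod r$ needs only $r>1$, so only $\gcd(p,q)=1$ (together with $p,q,r>1$) is actually needed.
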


\begin{proof}
Set
\begin{align*}
A&=1+H_p=1+qr\mathbb Z_p=\{1,1+qr,\ldots,1+(p-1)qr\}, \\
B&=1+H_q=1+pr\mathbb Z_q=\{1,1+pr,\ldots,1+(q-1)pr\}, \\
C&=H_{pq}=r\mathbb Z_{pq}=\{0,r,\ldots,(pq-1)r\},
\end{align*}
and
$$u=q\mathbf 1_A-p\mathbf 1_B, \quad v=\mathbf 1_C.$$
We claim that $u,v$ satisfy \eqref{EqCRR}. First we observe that 
$$A\cap B=\{1\},$$
since the subgroup of order $p$ and order $q$ only intersect at $0$. Thus we get 
$$\supp(u)=A\cup B.$$
Now every element of $\supp(v)=C$ is divisible by $r$, while every element of $\supp(u)$ gives $1$ mod $r$, hence we conclude
\begin{equation} \label{EqUVDisj}
\supp(u)\cap\supp(v)=\emptyset.
\end{equation} 
Next we observe that $\mathbf 1_A$ and $\mathbf 1_B$ are translations of $\mathbf 1_{H_p}$ and $\mathbf 1_{H_q}$ respectively, i.e.,
$$\mathbf 1_A=T\mathbf 1_{H_p}, \quad \mathbf 1_B=T\mathbf 1_{H_q}.$$
Therefore by first applying \eqref{EqTM}, then followed by \eqref{EqPoisson} we obtain
$$\hat u=q\F{A}-p\F{B}=qM\F{H_p}-pM\F{H_q}=\frac{pq}{\sqrt{pqr}}\cdot M(\mathbf 1_{H_p^{\perp}}-\mathbf 1_{H_q^{\perp}}).$$
By \eqref{EqPerp}, $H_p^{\perp}$ is the subgroup generated by $p$, while $H_q^{\perp}$ is the subgroup generated by $q$, they intersect at the subgroup generated by $pq$, therefore
\begin{equation} \label{EqSuppU}
\supp(\hat u)=\{k\in\mathbb Z_n: k \text{ is divisible by precisely one of } p,q \text{ but not } pq\}.
\end{equation}
Applying \eqref{EqPoisson} again we get
$$\check v=\hat v=\F{H_{pq}}=\frac{pq}{\sqrt{pqr}}\cdot \mathbf 1_{H_{pq}^{\perp}}.$$
Here $\check v=\hat v$ holds since both $v$ and $\hat v$ are even functions. By \eqref{EqPerp}, $H_{pq}^{\perp}$ is the subgroup generated by $pq$, therefore
\begin{equation} \label{EqSuppV}
\supp(\hat v)=\{k\in\mathbb Z_n: k \text{ is divisible by } pq\}.
\end{equation} 
Comparing \eqref{EqSuppV} with \eqref{EqSuppU} we get
\begin{equation} \label{EqFUVDisj}
\supp(\hat u)\cap\supp(\check v)=\emptyset.
\end{equation} 
Together \eqref{EqUVDisj} and \eqref{EqFUVDisj} indicate that the so constructed $u,v$ satisfy \eqref{EqCRR2}, and thus also \eqref{EqCRR}. 
\end{proof}

Example \ref{ExCR} corresponds to the case $p=2, q=3, r=5$.

Remark (Why at least $3$ factors are needed in this method): The basic idea is to take two subgroups, subtract them to cancel out a third, smaller subgroup in the intersection, this forms $\supp(\hat u)$. Then the cancelled out subgroup can be $\supp(\hat v)$. In the time domain, intersections are handled by translations (which does not change the frequency support). These steps are feasible due to \eqref{EqPerp} and \eqref{EqPoisson}.

\begin{proof}[\textbf{proof of Theorem \ref{ThmMain}}]
\begin{description}[leftmargin=*]
\item[The if part:] Suppose that $n$ is composite, $1<d<\sqrt{2n}$ is a divisor of $n$, and $m=n/d$. We will produce $1+\lfloor d/2\rfloor$ numbers of vectors $u^{(0)},  u^{(1)}, \ldots, u^{(\lfloor d/2\rfloor)}$ that mutually satisfy $\eqref{EqCRR2}$, and thus also $\eqref{EqCRR}$.

Let $u^{(0)}=\mathbf 1_{H_d}$, then 
\begin{equation} \label{EqSuppU2}
\supp(u^{(0)})=H_d=m\mathbb Z_d.
\end{equation}
Applying \eqref{EqPoisson} first, and then \eqref{EqPerp}, we get
$$\hat u^{(0)}=\F{H_d}=\frac{d}{\sqrt n}\cdot \mathbf 1_{H_d^{\perp}}=\frac{d}{\sqrt n}\cdot \mathbf 1_{H_m}.$$
Since $H_m=-H_m$ also holds, we get
\begin{equation} \label{EqSuppFU2}
\supp(\check u^{(0)})=\supp(\hat u^{(0)})=H_m=d\mathbb Z_m.
\end{equation}
For $u^{(k)}=(u_0^{(k)},\ldots,u_{n-1}^{(k)})^T$ with $k\in\{1,\ldots,\lfloor d/2\rfloor\}$, we define its $i$-th term to be
$$u_i^{(k)}=\begin{cases}\zeta_d^{-(i-k)k/m} & \text{if } i \bmod m=k, \\ 0 & \text{if } i \bmod m\neq k. \end{cases}$$
Thus
\begin{equation} \label{EqSuppV2}
\supp(u^{(k)})=k+H_d=k+m\mathbb Z_d=\{k,k+m,\ldots,k+(d-1)m\}, \quad k\in\{1,\ldots,\lfloor d/2\rfloor\}.
\end{equation}
Since $d<\sqrt{2n}$ implies $m=n/d>d/2\ge\lfloor d/2\rfloor$, combining \eqref{EqSuppU2} and \eqref{EqSuppV2} we can conclude that the supports between any pair of vectors from $u^{(0)}, u^{(1)}, \ldots, u^{(\lfloor d/2\rfloor)}$ are disjoint. Next, explicit computation shows that
\begin{align*}
\hat u_i^{(k)}&=\frac{1}{\sqrt n}\cdot \sum_{j=0}^{d-1}u_{jm+k}\cdot \chi_{jm+k}(i) \\
&=\frac{1}{\sqrt n}\cdot \sum_{j=0}^{d-1}\zeta_d^{-jk}\cdot\zeta_n^{i(jm+k)} \\
&=\frac{\zeta_n^{ik}}{\sqrt n}\cdot\sum_{j=0}^{d-1}\zeta_d^{-jk}\cdot\zeta_{dm}^{ijm} \\
&=\frac{\zeta_n^{ik}}{\sqrt n}\cdot\sum_{j=0}^{d-1}\zeta_d^{(i-k)j} \\
&=\begin{cases} d\cdot\zeta_n^{ik}/\sqrt n & \text{ if } i \bmod d=k, \\ 0 & \text{ if } i \bmod d\neq k.  \end{cases}
\end{align*}
Therefore, combined with \eqref{EqSuppFU2} we get
\begin{align} 
\supp(\hat u^{(k)})&=k+H_m=k+d\mathbb Z_m, \quad k\in\{0,1,\ldots,\lfloor \frac{d}{2}\rfloor\}, \label{EqSuppFV2} \\
\supp(\check u^{(k)})&=-k+d\mathbb Z_m=(d-k)+d\mathbb Z_m, \quad k\in\{0,1,\ldots,\lfloor \frac{d}{2}\rfloor\}. \label{EqSuppFV3}
\end{align}
Combining \eqref{EqSuppFV2} and \eqref{EqSuppFV3}, we see that if $k,k'\in\{0,1,\ldots,\lfloor \frac{d}{2}\rfloor\}$ are distinct, then $\supp(\hat u^{(k)})$ is the coset $k+d\mathbb Z_m$, while $\supp(\check u^{(k')})$ is the coset $(d-k')+d\mathbb Z_m$, they must be distinct and thus non-intersecting since $k,k'\le \lfloor \frac{d}{2}\rfloor$ implies
$$d-k'\ge\lfloor\frac{d}{2}\rfloor\ge k,$$
where equality holds if and only if $d$ is even and $k=k'$. 

Together we conclude that any two vectors from $u^{(0)}, u^{(1)}, \ldots, u^{(\lfloor\frac{d}{2}\rfloor)}$ satisfy \eqref{EqCRR2}, and thus also \eqref{EqCRR}.

\item[The only if part:] The contrapositive direction that ``if $n$ is prime (not composite), then there exist no such pairs'' is exactly the final step in Haagerup's proof. We repeat it from \cite[Theorem 3.5]{haagerup2008} (see also \cite[Theorem 5.6]{benedetto2019}) here:

Suppose $n=p$ is a prime number. Assume, for the purpose of contradiction that such pair $u,v$ exist, then \eqref{EqCRR2} can be quantitive described as
\begin{equation} \label{EqUpper}
|\supp(u)|+|\supp(v)|+|\supp(\hat u)|+|\supp(\check v)|\le p+p=2p.
\end{equation}
However, the additive uncertainty principle \cite{tao2005} states that for any $x\in\mathbb C^p\setminus\{0\}$ we should have
\begin{equation} \label{EqUPp}
|\supp(x)|+|\supp(\hat x)|\ge p+1.
\end{equation} 
Applying \eqref{EqUPp} on $u$ and $v$ respectively we get
$$|\supp(u)|+|\supp(\hat u)|+|\supp(v)|+|\supp(\hat v)|\ge (p+1)+(p+1)=2p+2,$$
which contradicts the upper bound in \eqref{EqUpper}. 
\end{description}
\end{proof}

Example \ref{ExCR2} corresponds to the case $d=2, m=3$. The vectors $u^{(k)}$ actually satisfy
$$u^{(k)}=T^kM^{-k}u^{(0)}, \quad k\in\{0,1,\ldots,\lfloor\frac{d}{2}\rfloor\},$$
which explains their mutual time-frequency disjointness.

\subsection{The case of a single vector}
In this part we consider a closely related but slightly different problem: When do we have $x\in\mathbb C^n\setminus\{0\}$ that satisfy
\begin{equation} \label{EqCRR3}
\supp(x)\cap\supp(\hat x)=\emptyset.
\end{equation} 

Let
$$\eval{F}{S\times S'}=[F_{i,j}]_{i\in S, j\in S'}$$ 
be the submatrix of $F$ obtained by taking row indices from $S$ and column indices from $S'$. Similarly for a vector $x$, the notation 
$$\eval{x}{S}=[x_i]_{i\in S}$$
means the restriction of $x$ to $S$, which is a vector in $\mathbb C^{|S|}$ with index set $S$. If $S=S'$, then the submatrix $\eval{F}{S\times S'}$ is called a \emph{principal submatrix} of $F$, its determinant is called a \emph{principal minor} of $F$ (accordingly a \emph{minor} of $F$ is the determinant of a square submatrix that is not necessarily principal). If we denote $F'=\eval{F}{\supp(x)\times\supp(x)}$, then it is clear that
\begin{equation} \label{EqCRR4}
\exists x \text{ that satisfies \eqref{EqCRR3}} \quad \Leftrightarrow\quad \exists x \text{ s.t. } \eval{F'x}{\supp(x)}=0 \quad\Leftrightarrow\quad \det F'=0.
\end{equation}
Therefore the task of looking for $x$ that satisfies \eqref{EqCRR3} translates into identifying singular principal submatrices of $F$. Then $x$ can be obtained by embedding vectors from the kernel of such singular submatrices into the full space.

If $p$ is prime, then the Chebotar{\"e}v theorem (see e.g., \cite{stevenhagen1996}) asserts that all minors of the $p \times p$ Fourier matrix are non-vanishing, thus no such $x$ exists. The Chebotar{\"e}v theorem also implies the additive uncertainty principle \eqref{EqUPp}.

If $p^m$ is a prime power (i.e., $p$ is prime, $m\ge 2$ is a natural number), $F$ is the $p^m\times p^m$ Fourier matrix, and $H_p=p^{m-1}\mathbb Z_p$ the subgroup of order $p$ in $\mathbb Z_{p^m}$, then it is easy to see that $\eval{F}{H_p\times H_p}$ is the all one matrix and is thus singular.

In light of these observations, it is recently proposed in \cite{caragea2025} (see also \cite{cabrelli2025}) that the following extension of the Chebotar{\"e}v Theorem could be true:
\begin{Conj}[\cite{caragea2025}] \label{ConjCLMP}
All principal minors in the $n\times n$ Fourier matrix are non-vanishing for square free $n$.
\end{Conj}
Partial results for various forms of square free $n$ have been established in \cite{romanos2025,loukaki2025}.

We would like to mention that the scope of Conjecture \ref{ConjCLMP} can easily be, and considering the observation in Proposition \ref{PropZPD} also shall be, extended to all finite Abelian groups. If $G=\mathbb Z_{n_1}\times\ldots\times\mathbb Z_{n_d}$ is a finite Abelian group, then its dual is
$$\widehat G=\widehat{\mathbb Z}_{n_1}\times\ldots\times\widehat{\mathbb Z}_{n_d}=\{\chi_s(t)\}_{s\in G},$$
where the group characters are functions on $G$ defined as 
$$\chi_s(t)=\chi_{s_1}(t_1)\cdot \chi_{s_2}(t_2)\cdots\chi_{s_d}(t_d)$$ 
with $s=(s_1,\ldots, s_d)\in G, t=(t_1,\ldots, t_d)\in G$, and $\chi_{s_k}(t_k)\in\widehat{\mathbb Z}_{n_k}$ for each $k\in\{1,\ldots,d\}$. The Fourier transform of a function $f\in\ell^2(G)$ is still on $G$ and is defined as
$$\hat f(t)=\frac{1}{\sqrt{|G|}}\sum_{s\in G}f(s)\chi_s(t), \quad t\in G.$$
And the Fourier matrix for $G$ is the character table $F=|G|^{-1/2}[\chi_s(t)]_{s,t\in G}$. 

If $G,G'$ are two finite Abelian groups, and $x\in\ell^2(G)$ satisfies \eqref{EqCRR3}, then the embedding $x\mapsto (x,0)$ easily produces a new instance in $G\times G'$ that satisfies \eqref{EqCRR3}. Therefore, to understand whether there exists $x$ that can satisfy \eqref{EqCRR3} on a finite Abelian group, it suffices to understand its existence in the prime power component $\mathbb Z_{p^m}$ with $m\ge 2$ (true), the square free component $\mathbb Z_{p_1}\times\cdots\times\mathbb Z_{p_k}$ (conjectured not), and the prime product component $\mathbb Z_p^d$. For the last component we have:

\begin{Prop} \label{PropZPD}
Let $p$ be a prime number and $d\ge 2$ a natural number, then there exists some $x\in\ell^2(\mathbb Z_p^d)\setminus\{0\}$ that satisfy \eqref{EqCRR3}.
\end{Prop}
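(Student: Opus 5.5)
The plan is to reduce to $d=2$ and then build $x$ as a signed difference of indicator functions of two subgroups of order $p$ (lines through the origin in $\mathbb Z_p^2$), in the spirit of the proof of Proposition \ref{PropPQR}. The reduction uses the embedding remark made just before the statement: if $x\in\ell^2(\mathbb Z_p^2)$ satisfies \eqref{EqCRR3}, then $(x,0)$ satisfies \eqref{EqCRR3} on $\mathbb Z_p^2\times\mathbb Z_p^{d-2}=\mathbb Z_p^d$. So it suffices to treat $G=\mathbb Z_p^2$.

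\textbf{Tool.} For a subgroup $H\le G$, the analogue of \eqref{EqPoisson} holds by the same direct computation:
$$\widehat{\mathbf 1}_H=\frac{|H|}{\sqrt{|G|}}\mathbf 1_{H^\perp},\qquad H^\perp=\{s\in G:\ s\cdot t\equiv 0 \bmod p\ \ \forall t\in H\}.$$
When $|H|=p$ the constant is $p/\sqrt{p^2}=1$, so $\widehat{\mathbf 1}_H=\mathbf 1_{H^\perp}$. In $\mathbb Z_p^2$ the subgroups of order $p$ are exactly the $p+1$ lines through $0$, and $H\mapsto H^\perp$ is an involution on this set of lines.

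\textbf{Case $p$ odd.} Take two distinct lines $H,H'$ through $0$ and set $x=\mathbf 1_H-\mathbf 1_{H'}$. Since $H\cap H'=\{0\}$, the value at the origin cancels, so $\supp(x)=(H\cup H')\setminus\{0\}$. Similarly $\hat x=\mathbf 1_{H^\perp}-\mathbf 1_{H'^\perp}$ has $\supp(\hat x)=(H^\perp\cup H'^\perp)\setminus\{0\}$.

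Two distinct lines through $0$ meet only at $0$. Hence \eqref{EqCRR3} holds as soon as the line sets $\{H,H'\}$ and $\{H^\perp,H'^\perp\}$ are disjoint, which amounts to the following conditions:
\begin{itemize}
\item $H\ne H^\perp$ and $H'\ne H'^\perp$, i.e. neither line is isotropic;
\item $H'\ne H^\perp$, which is equivalent to $H\ne H'^\perp$.
\end{itemize}
The isotropic lines are spanned by $(1,c)$ with $c^2=-1$, so there are at most two of them. Pick a non-isotropic $H$; then $H^\perp\ne H$. It remains to find $H'$ outside $\{H,H^\perp\}$ and outside the isotropic lines. At most $4$ of the $p+1$ lines are excluded. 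For $p\ge 5$ this leaves at least $2$ choices. For $p=3$ there are no isotropic lines, because $-1$ is not a square mod $3$; so only $H,H^\perp$ are excluded and $2$ of the $4$ lines remain.

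\textbf{Case $p=2$.} The construction above fails here. The three lines are spanned by $(1,0)$, $(0,1)$ and $(1,1)$, and the only non-isotropic pair $\{(1,0),(0,1)\}$ is mapped to itself by $\perp$. Instead I would use the self-dual line $H=\{(0,0),(1,1)\}=H^\perp$ together with translation and modulation. Let $x$ be $\mathbf 1_{a+H}$ multiplied by a character $\chi_b$. By the $G$-analogue of \eqref{EqTM} (translation becomes modulation and vice versa), $\hat x$ is a unimodular multiple of $\mathbf 1_{H^\perp}$ translated by $\pm b$, i.e. supported on $\pm b+H$. Choosing $a=(1,0)$ and $b=0$ gives $\supp(x)=\{(1,0),(0,1)\}$ and $\supp(\hat x)=H=\{(0,0),(1,1)\}$, which are disjoint. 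Concretely $x=(0,1,1,0)$ indexed by $(0,0),(0,1),(1,0),(1,1)$, and one checks directly that $\hat x$ vanishes at $(0,1)$ and $(1,0)$.

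\textbf{Main obstacle.} The delicate point is the combinatorial choice of lines: avoiding isotropic (self-perpendicular) lines and mutually perpendicular pairs, which is exactly what forces the separate treatment of $p=2$. The Fourier computations themselves are routine applications of the Poisson-type identity above.
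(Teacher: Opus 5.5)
Your proof is correct, but it is organized differently from the paper's.

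The paper aims for minimal support size. Whenever $\mathbb Z_p^d$ contains a nonzero $v$ with $v\cdot v\equiv 0 \pmod p$, it takes $S=\{0,v\}$. Then $F|_{S\times S}$ is a multiple of the all-ones $2\times 2$ matrix, and a kernel vector gives $x$ with $|\supp(x)|=2$. It shows such a $v$ exists in three cases: for $p=2$; for $p\equiv 1 \pmod 4$, using Wilson's theorem to get $a^2\equiv -1$; and for $p\equiv 3 \pmod 4$ with $d\ge 3$, solving $1+a^2+b^2=0$ by counting quadratic residues.

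Only in the leftover case $\mathbb Z_p^2$ with $p\equiv 3\pmod 4$, where no such $v$ exists, does the paper use $x=\mathbf 1_{\langle(1,0)\rangle}-\mathbf 1_{\langle(1,1)\rangle}$. This is exactly your odd-$p$ construction for one particular pair of lines. That pair satisfies your conditions for every odd $p$, so your counting of excluded lines could be replaced by this explicit choice.

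Your route embeds down to $d=2$ and uses two non-isotropic, non-perpendicular lines for all odd $p$. It is uniform and needs essentially no number theory, at the cost of support size $2p-2$ instead of the minimal $2$.

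Your separate treatment of $p=2$ is genuinely necessary. The paper remarks that the two-line construction ``works for all cases'', but this fails at $p=2$. There $\langle(1,1)\rangle$ is self-perpendicular in $\mathbb Z_2^2$, so the supports of $x$ and $\hat x$ share the point $(1,1)$. Your example $\mathbf 1_{\{(1,0),(0,1)\}}$ is precisely the Fourier transform of the paper's $p=2$ example $\mathbf 1_{\{0\}}-\mathbf 1_{\{(1,1)\}}$.
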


\begin{proof}
Although the last construction in this proof below actually works for all cases, we seek, with a bit extra effort, to produce $x$ of minimal support size.

If $p=2$, then we set $S=\{0,(1,1,\underbrace{0,\ldots,0}_{d-2 \text{ terms}})\}$. It is easy to see that $\eval{F}{S\times S}=\begin{pmatrix} 1 & 1 \\ 1 & 1 \end{pmatrix}$ is singular.

If $p=4k+1$ for some $k\in\mathbb N$, then $-1$ is a quadratic residue modulo $p$. The number $a=(2k)!$ is a square root of $-1$, since by Wilson's theorem we have 
$$a^2=\left((2k)!\right)^2=(-1)^{2k}((2k)!)^2=(2k!)\cdot\prod_{j=1}^{2k}(-j)=(4k)!=(p-1)!=-1 \pmod p.$$
Thus we get $1^2+a^2=0\pmod p$. Therefore setting $S=\{0,(1,a,\underbrace{0,\ldots,0}_{d-2 \text{ terms}})\}$, we again get that $\eval{F}{S\times S}=\begin{pmatrix} 1 & 1 \\ 1 & 1 \end{pmatrix}$ is singular. 

If $p=4k+3$ for some $k\in\mathbb N$ and $d\ge 3$, then first we notice that the equation 
\begin{equation} \label{EqAB}
1+a^2+b^2=0
\end{equation} 
admits a pair of solutions $a,b$ in $\mathbb Z_p$. Indeed, every pair of non-zero elements $\pm r\in\mathbb Z_p$ produces a unique quadratic residue $(\pm r)^2$. Thus counting $0$, in total there are $1+(p-1)/2=(p+1)/2$ quadratic residues. If $Q$ is the set of quadratic residues in $\mathbb Z_p$ and $Q'=-1-Q$, then we have $|Q|=|Q'|=(p+1)/2$. Therefore $Q$ and $Q'$ must intersect non-trivially. Then by writing an element in the intersection in two different way, as $a^2$ (since it belongs to $Q$) and as $-1-b^2$ (since it belongs to $Q'$), solves \eqref{EqAB}. Now we set $S=\{0,(1,a,b,\underbrace{0,\ldots,0}_{d-3 \text{ terms}})\}$, then again we get that $\eval{F}{S\times S}=\begin{pmatrix} 1 & 1 \\ 1 & 1 \end{pmatrix}$ is singular.

In all cases above, existence of such $x$ then follows from \eqref{EqCRR4}.

For the final case $\mathbb Z_p^2$ with $p=4k+3$ for some $k\in\mathbb N$, it is not possible to produce $2\times 2$ singular principal submatrices any more, the minimal support size for such $x$ seem to be difficult to determine either, but an explicit construction of such $x$, with support size $2p-2$, is the following: let $\bra{(a,b)}$ denote the cyclic subgroup generated by $(a,b)$, then we may set
$$x=\mathbf 1_{\bra{(1,0)}}-\mathbf 1_{\bra{(1,1)}}.$$
Since all proper subgroups must intersect trivially in $\mathbb Z_p^2$, we easily get
$$\supp(x)=\left(\bra{(1,0)}\cup\bra{(1,1)}\right)\setminus\{(0,0)\}.$$
Next one may verify (either through direction computation, or use \eqref{EqPerp} and \eqref{EqPoisson}, the orthogonal subgroup can be defined analogously in finite Abelian groups) that
$$\hat x=\mathbf 1_{\bra{(0,1)}}-\mathbf 1_{\bra{(1,-1)}},$$
and then
$$\supp(\hat x)=\left(\bra{(0,1)}\cup\bra{(1,-1)}\right)\setminus\{(0,0)\}$$
is disjoint with $\supp(x)$. 
\end{proof}

Proposition \ref{PropZPD} suggests that Conjecture \ref{ConjCLMP} can be reasonably extended to

\begin{Conj}
Let $G$ be a finite Abelian group, then there exists $x\in\ell^2(G)\setminus\{0\}$ that satisfy \eqref{EqCRR3} if and only if $|G|$ is not square free.
\end{Conj}

\section{Supplements on backgrouds} \label{SecRef}
\subsection{Bi-unimodular vectors and CAZAC sequences} 
A vector $x\in\mathbb C^n$ is called \emph{unimodular} if $|x_0|=\ldots=|x_{n-1}|=1$. A unimodular vector $x$ is called \emph{bi-unimodular} if $\hat x$ is also unimodular. We say that $x$ has \emph{leading entry $1$} if $x_0=1$.

Let $\bra{\cdot,\cdot}$ be the inner product on $\mathbb C^n$. A sequence $x=(x_0,x_1,\ldots,x_{n-1})^T\in\mathbb C^n$ is called a \emph{CAZAC (constant amplitude zero auto correlation) sequence} if $|x_0|=\ldots=|x_{n-1}|=1$ (unit constant amplitude) and $\bra{Tx,x}=\ldots=\bra{T^{n-1}x,x}=0$ (zero auto correlation). 

It is easy to verify that the constant amplitude condition $|x_0|=\ldots=|x_{n-1}|$ is equivalent to $\bra{Mx,x}=\ldots=\bra{M^{n-1}x,x}=0$. Indeed, writing the diagonal of $M^j$ into a vector we get $(1,\zeta_n^j,\ldots,\zeta_n^{(n-1)j})^T$, which are columns of $\sqrt nF$. Therefore $(Mx,x)=\dots=\bra{M^{n-1}x,x}=0$ is equivalent to the vector $(|x_1|,\ldots,|x_n|)$ being orthogonal to all columns except the all one column in $F$, which means $|x_1|=\ldots=|\hat x_n|$. 

Applying the Fourier transform, The vanishing conditions $(Mx, x)=\dots=\{M^{n-1}x, x\}=(Tx, x)=\dots=\{T^{n-1}x, x\}=0$ transform into $(T\hat x,\hat x)=\dots=\{T^{n-1}\hat x,\hat x\}=(M\hat x,\hat x)=\dots=\{M^{n-1}\hat x,\hat x\}=0$, which implies that $\hat x$ has zero auto correlation and constant amplitude. Finally the Parseval identity $\norm{x}=\norm{\hat x}$ shows that $|\hat x_i|$ has to be $1$ for all $i\in\{0,1,\ldots,n-1\}$. Therefore
\begin{equation} \label{EqCB}
x \text{ is CAZAC} \;\Leftrightarrow\; \hat x \text{ is CAZAC} \;\Leftrightarrow\; x \text{ is bi-unimodular} \;\Leftrightarrow\; \hat x \text{ is bi-unimodular}.
\end{equation}

CAZAC sequences finds many applications in radar and cellular communications (see e.g, the introductory part of \cite{benedetto2019}), in the theoretical aspect it can be used to construct circulant Hadamard matrices. An $n\times n$ matrix is \emph{circulant} if its columns are $\{x,Tx,\ldots,T^{n-1}x\}$ for some $x\in\mathbb C^n$, and \emph{Hadamard} if it is orthogonal and all its entries are on the unit circle in $\mathbb C$. It is immediate to see that if $x$ is CAZAC, then $[x,Tx,\ldots,T^{n-1}x]$ forms a circulant Hadamard matrix and vice versa:
$$x \text{ is CAZAC} \;\Leftrightarrow\; [x,Tx,\ldots,T^{n-1}x] \text{ is circulant Hadamard}.$$

One example of a CAZAC sequence is the Gaussian sequence $\{\zeta_n^{ak^2+bk}\}_{k=0}^{n-1}$ where $n$ is odd, $a,b\in\mathbb Z$ are fixed and $a$ is co-prime to $n$ (for even $n$ one may use $\{\zeta_{2n}^{ak^2}\}_{k=0}^{n-1}$ instead). Such a sequence is also an eigenvector of the discrete time-frequency shift $M^{2ak}T^k$ with eigenvalue $\zeta_n^{ak^2-bk}$. CAZAC sequences need not contain only $n$-th roots of unity: The first non-trivial example is perhaps the Bj{\"o}rck sequence $(1,1,1,e^{i\theta},1,e^{i\theta},e^{i\theta})$ at $n=7$ for $\theta=\arccos(-3/4)$. Bj{\"o}rck sequences can be defined for all prime lengths \cite[Section 4.1]{benedetto2019} and as can be seen from the $n=7$ example, they may contain numbers of form $e^{2\pi i\alpha}$ for irrational $\alpha$. Bj{\"o}rck sequences are derived by counting the number of quadratic residues that remain quadratic residues upon shifting, for which there are explicit formulas (see e.g., \cite{perron1952}). The search for CAZAC sequences has been a central topic in the related application area, it is known that if $n$ is not square free, then there are infinitely many CAZAC sequences with leading entry one \cite{backelin1989,bjorck1995}.

\subsection{Cyclic $n$-roots and CAZAC sequences} 
It is easy to verify that the following map is a bijection between unimodular cyclic $n$-roots and CAZAC sequences with leading entry $1$:
\begin{equation} \label{EqR2C}
(z_0,z_1,\ldots,z_{n-2},z_{n-1})\mapsto \left(\frac{x_1}{x_0}, \frac{x_2}{x_1}, \ldots, \frac{x_{n-1}}{x_{n-2}}, \frac{x_0}{x_{n-1}}\right), \quad x_0=1.
\end{equation}
Indeed, by construction, the last equation in \eqref{EqCycR} holds automatically since 
$$z_0\cdots z_{n-1}=\frac{x_1\cdots x_{n-1}x_0}{x_0\cdots x_{n-1}}=1,$$
while expanding other equations we see that
\begin{align*}
z_0+z_1+\ldots+z_{n-1}=0 \quad&\Leftrightarrow\quad \bra{Tx,x}=0, \\
z_0z_1+z_1z_2+\ldots+z_{n-1}z_0=0 \quad&\Leftrightarrow\quad \bra{T^2x,x}=0, \\
&\;\;\vdots \\
z_0z_1\cdots z_{n-2}+\ldots+z_{n-1}z_0\cdots z_{n-3}=0 \quad&\Leftrightarrow\quad \bra{T^{n-1}x,x}=0, \\
\end{align*}
thus the first $n-1$ equations in \eqref{EqCycR} corresponds to the vanishing auto correlation conditions one by one. Therefore

\begin{equation} \label{EqCR}
x \text{ is CAZAC with $x_0=1$} \;\Leftrightarrow\; z \text{ is a unimodular cyclic $n$-root}.
\end{equation}
Not all cyclic $n$-roots are unimodular, for example, among the $156$ cyclic $6$-roots found in \cite{bjorck1991}, only $48$ of them are unimodular \cite{haagerup1997}.

\subsection{Haagerup's reduction}
In order to make the paper self-contained, and understand the origin of Proposition \ref{PropCRR}, we briefly outline Haagerup's proof on finiteness of cyclic $n$-roots for prime $n$ below. The proof steps are repeated and condensed from the exposition in \cite{benedetto2019}. 

Let $z,x\in(\mathbb C\setminus\{0\})^n$ be as defined in \eqref{EqR2C} but without assuming $z$ being unimodular or $x$ being CAZAC, then \eqref{EqCR} transforms into 
\begin{align*}
z_0+z_1+\ldots+z_{n-1}=0 \quad&\Leftrightarrow\quad \frac{x_1}{x_0}+\ldots+\frac{x_{n-1}}{x_{n-2}}+\frac{x_0}{x_{n-1}}=0, \\
z_0z_1+z_1z_2+\ldots+z_{n-1}z_0=0 \quad&\Leftrightarrow\quad \frac{x_2}{x_0}+\ldots+\frac{x_0}{x_{n-2}}+\frac{x_1}{x_{n-1}}=0, \\
&\;\;\vdots \\
z_0z_1\cdots z_{n-2}+\ldots+z_{n-1}z_0\cdots z_{n-3}=0 \quad&\Leftrightarrow\quad \frac{x_{n-1}}{x_0}+\ldots+\frac{x_{n-3}}{x_{n-2}}+\frac{x_{n-2}}{x_{n-1}}=0, \\
\end{align*}
thus if we set $y=(1/x_0,\ldots,1/x_{n-1})$, then we get
\begin{equation} \label{EqCR2}
z \text{ is a cyclic $n$-root} \quad\Leftrightarrow\quad \begin{dcases}x_0=y_0=1, \\ x_iy_i=1, & i\in\{1,\ldots,n-1\}, \\ \sum_{i=0}^{n-1}x_{i+k}y_i=0, & k\in\{1,\ldots,n-1\}.\end{dcases}
\end{equation}
The last summation can be written concisely as $\bra{T^kx,\bar y}$ where $\bar y$ is the vector obtained by taking complex conjugates on each entry of $y$. Taking Fourier transform and setting $c_i=\hat x_i\hat y_{-i}$, we see that, same as the derivation of \eqref{EqCB}, the vector $(c_i)_{i=0}^{n-1}$ is in the span of $(1,1,\ldots,1)$. On the other hand, we also have
$$c_0+\ldots+c_{n-1}=\sum_{i=0}^{n-1}\hat x_i\hat y_{-i}=\sum_{i=0}^{n-1}\bra{\hat x,\bar{\check y}}=\bra{\hat x,\widehat{\bar y}}=\bra{x,\bar y}=\sum_{i=0}^{n-1}x_iy_i=n.$$
Hence each $c_i$ is $1$. Noticing also that $\hat y_{-i}=\check y_i$ holds, one can thus further reduce \eqref{EqCR2} to
\begin{equation} \label{EqCR3} 
z \text{ is a cyclic $n$-root} \quad\Leftrightarrow\quad \begin{dcases}x_0=y_0=1, \\ x_iy_i=1, & i\in\{1,\ldots,n-1\}, \\ \hat x_i\check y_{i}=1, & i\in\{1,\ldots,n-1\}.\end{dcases}
\end{equation}

If the set of solution pairs $(x,y)$ to the system of polynomial in \eqref{EqCR3} is infinite, then as an algebraic variety in $\mathbb C^{2n}$ it can not be compact, and must have a positive dimension and be unbounded. Then one can extract a sequence of pairs $(x^{(m)}, y^{(m)})$ from this algebraic variety such that 
$$\norm{x^{(m)}}^{2}+\norm{y^{(m)}}^{2}\to\infty.$$
Set 
$$c_m=\sum_{i=1}^{n-1}|x_i^{(m)}|^2=\norm{x^{(m)}}^{2}-|x_0^{(m)}|^2, \quad d_m=\sum_{i=1}^{n-1}|y_i^{(m)}|^2=\norm{y^{(m)}}^{2}-|y_0^{(m)}|^2,$$
and recall that $x_0^{(m)}=y_0^{(m)}=1$, one also gets
\begin{equation} \label{EqXYInf}
\norm{x^{(m)}}^{2}\cdot\norm{y^{(m)}}^{2}=(1+c_m)(1+d_m)\ge c_m+d_m+1=\norm{x^{(m)}}^{2}+\norm{y^{(m)}}^{2}-1\to\infty.
\end{equation} 
Now let
$$u^{(m)}=\frac{x^{(m)}}{\norm{x^{(m)}}}, \quad v^{(m)}=\frac{y^{(m)}}{\norm{y^{(m)}}},$$
be normalizations of $x^{(m)},y^{(m)}$ respectively. The normalized sequence $\{(u^{(m)},v^{(m)})\}_{m\in\mathbb N}$ is in a compact set, and admits a convergent subsequence $(u^{(m')},v^{(m')})\to(u,v)$. Then we must have $u\neq 0$ and $v\neq 0$ since $\norm{u^{(m')}}=\norm{v^{(m')}}=1$ holds for each $m'$, while \eqref{EqXYInf} indicates that
$$u_iv_i=\lim_{m'\to\infty} u_i^{(m')}\cdot v_i^{(m')}=\lim_{m'\to\infty} \frac{x_i^{(m')}\cdot y_i^{(m')}}{\norm{x^{(m')}}\cdot \norm{v^{(m')}}}=\lim_{m'\to\infty} \frac{1}{\norm{x^{(m')}}\cdot \norm{v^{(m')}}}=0,$$
$$\hat u_i\check v_{i}=\lim_{m'\to\infty} \hat u_i^{(m')}\cdot \check v_{i}^{(m')}=\lim_{m'\to\infty} \frac{\hat x_i^{(m')}\cdot \check y_{i}^{(m')}}{\norm{x^{(m')}}\cdot \norm{v^{(m')}}}=\lim_{m'\to\infty} \frac{1}{\norm{\hat x^{(m')}}\cdot \norm{\hat v^{(m')}}}=0,$$

Together these lead to Proposition \ref{PropCRR}, which is the core reduction for the proof that cyclic $p$-roots ($p$ is prime) are finite. The remaining steps are already repeated in the proof of the only if part of Theorem \ref{ThmMain}.

\end{document}